\newtheorem{thm}{Theorem}[section]
\newtheorem*{thm*}{Theorem}
\newtheorem{fact}[thm]{Fact}
\newtheorem{prop}[thm]{Proposition}
\newtheorem*{prop*}{Proposition}
\newtheorem{cor}[thm]{Corollary}
\newtheorem*{cor*}{Corollary}
\theoremstyle{definition}
\newtheorem*{defn*}{Definition}
\newtheorem{remark}[thm]{Remark}
\newtheorem*{question*}{Question}
\newtheorem*{Pquestion*}{Popa's question}
\newtheorem*{conv*}{Convention}
\def\bb{\mathbb}
\def\bb{\mathbb}
\def\cal{\mathcal}
\def\dotminussym#1#2{%
  \setbox0=\hbox{$\m@th#1-$}%
  \kern.5\wd0%
  \hbox to 0pt{\hss\hbox{$\m@th#1-$}\hss}%
  \raise.6\ht0\hbox to 0pt{\hss$\m@th#1.$\hss}%
  \kern.5\wd0}
    \title{The Schr\"oder-Bernstein property for normal operators on Hilbert spaces}
\author{Nicolás Cuervo Ovalle, Isaac Goldbring and Netanel Levi}
\address{Department of Mathematics\\ Los Andes University, Bogotá, Colombia,
Cra. 1 \#18a-12}
\email{n.cuervo10@uniandes.edu.co}
\thanks{The first named author was partially supported by NSF grant DMS-2054477. He would also like to thank the UC Irvine Department of Mathematics for their hospitality.}
\address{Department of Mathematics\\University of California, Irvine, 340 Rowland Hall (Bldg.\# 400),
Irvine, CA 92697-3875}
\email{isaac@math.uci.edu}
\urladdr{http://www.math.uci.edu/~isaac}
\thanks{The second-named author was partially supported by NSF grant DMS-2054477.}
\address{Department of Mathematics\\University of California, Irvine, 340 Rowland Hall (Bldg.\# 400),
Irvine, CA 92697-3875}
\email{netanell@uci.edu}
\urladdr{https://sites.google.com/uci.edu/netanelevi/}
\thanks{The third named author was supported by NSF DMS-2052899, DMS-2155211, and
Simons 896624.}
\begin{document}

 \begin{abstract}
We establish that the complete theory of a Hilbert space equipped with a normal operator has the Schr\"oder-Bernstein property.  This answers a question of Argoty, Berenstein, and the first-named author.  We also prove an analogous statement for unbounded self-adjoint operators.
 \end{abstract}
\maketitle
\section{Introduction}

The classical Schr\"oder-Bernstein theorem in set theory states that when $X$ and $Y$ are sets for which there are injections $X\hookrightarrow Y$ and $Y\hookrightarrow X$, then in fact there is a bijection between $X$ and $Y$.  It is natural to ask if this property holds when $X$ and $Y$ are not mere sets but have additional structure on them. One appropriate setting for such a generalization is to consider structures (in the model-theoretic sense) $\mathcal{M}$ and $\mathcal{N}$ in the same language for which there exist \emph{elementary} embeddings $\mathcal{M}\hookrightarrow \mathcal{N}$ and $\mathcal{N}\hookrightarrow \mathcal{M}$ and to ask if in this case $\mathcal{M}$ and $\mathcal{N}$ must be isomorphic.  When this phenomena holds for all models of a first-order theory, we say that theory has the \textbf{Schr\"oder-Bernstein property} or \textbf{SB-property} for short.

A key motivation for studying the SB-property is that, if a theory $T$ satisfies this property, then its models admit a classification in terms of a well-behaved collection of invariants, thereby indicating a robust structural understanding of the models of the theory. This property has been considered in classical model theory \cite{SBpropertyJohn,SBpropertyW-stable} and in the setting of continuous logic in \cite{SBproperty}. It was also studied in various other settings, including those of Banach spaces \cite{ferenczi-galego,galego-quintuples,galego-solutions,gowers,  koszmider}, modules over rings \cite{dehghani}, category theory \cite{laackman}, and in the context of operator algebras \cite{dixmier}.

In \cite{SBproperty}, the authors introduce a weakening of the SB property for metric structures known as the \textbf{SB-property up to perturbations}; the functional analyst will recognize this weakening as the difference, for example, between unitary equivalence and approximate unitary equivalence of operators. The authors of \cite{SBproperty}  show that the complete theory of atomless probability algebras equipped with a generic automorphism is a theory with the SB-property up to perturbations but without the full-fledged SB-property (\cite[Theorem 3.17 and Corollary 5.10]{SBproperty}), whence the former property is a genuine weakening of the latter property in general.

The authors of \cite{SBproperty} posed the question as to whether or not the complete theory of a structure of the form $(\mathcal{H},T)$ has the SB-property, where $\mathcal{H}$ is a Hilbert space and $T$ is a bounded, self-adjoint operator on $\mathcal{H}$ (\cite[Question 2.29]{SBproperty}).  The authors of \cite{SBproperty} managed to show that the complete theory of any such pair $(\mathcal{H},T)$ has the SB-property up to perturbations (\cite[Theorem 2.28]{SBproperty}) but were only able to show that it has the actual SB-property when $T$ has countable spectrum (\cite[Proposition 2.30]{SBproperty}).

In this paper, we give a positive answer to the above question; in fact, one does not even need to assume that the operators are models of the same theory nor that the embeddings between these structures are elementary.  In addition, one does not even need to require the operator to be self-adjoint, but instead can assume that the operator is merely normal.\footnote{A posteriori, if $(\cal H_1,T_1)$ and $(\cal H_2,T_2)$ satisfy the assumptions of the following theorem, then they are necessarily models of the same theory and, moreover, such embeddings are automatically elementary; these statements are a consequence of the fact that spectrally equivalent normal operators are approximately unitarily equivalent.}  We rephrase this positive resolution of the question in more functional analytic terms:

\begin{thm*}\label{thm_unit_equiv}
	Let $\mathcal{H}_1,\mathcal{H}_2$ be Hilbert spaces and let $T_i:\mathcal{H}_i\to\mathcal{H}_i$ be  normal operators for $i=1,2$. Suppose that there exist linear isometries $U_1:\mathcal{H}_1\to\mathcal{H}_2$ and $U_2:\mathcal{H}_2\to\mathcal{H}_1$ such that $T_1U_2=U_2T_2$ and $T_2U_1=U_1T_1$. Then $T_1$ and $T_2$ are unitarily equivalent, that is, there is a unitary transformation $U:\mathcal{H}_1\to \mathcal{H}_2$ such that $UT_1=T_2U$.
\end{thm*}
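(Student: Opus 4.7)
The plan is to reduce the statement to Schr\"oder--Bernstein for cardinals via the spectral theorem for normal operators. First I would promote each intertwining isometry to a $*$-intertwiner. From $T_2 U_1 = U_1 T_1$ the closed range $U_1(\mathcal{H}_1)$ is $T_2$-invariant, and $T_2$ restricted to it is unitarily equivalent via $U_1$ to the normal operator $T_1$. A standard $2\times 2$ block-matrix calculation (or the Fuglede--Putnam theorem) shows that any closed invariant subspace on which a normal operator restricts normally is actually reducing, so $T_2^* U_1 = U_1 T_1^*$, and the analogous statement holds for $U_2$. Consequently each $U_i$ commutes with the bounded Borel functional calculus; in particular $E_2(B) U_1 = U_1 E_1(B)$ and $E_1(B) U_2 = U_2 E_2(B)$ for every Borel $B \subseteq \mathbb{C}$, where $E_i$ denotes the spectral measure of $T_i$.

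It follows that, for each Borel $B$, $U_1$ restricts to an isometric embedding $E_1(B)\mathcal{H}_1 \hookrightarrow E_2(B)\mathcal{H}_2$, with $U_2$ giving the reverse embedding. Schr\"oder--Bernstein for cardinals then yields $\dim E_1(B)\mathcal{H}_1 = \dim E_2(B)\mathcal{H}_2$ for every Borel $B$. In particular $E_1(B) = 0$ iff $E_2(B) = 0$, so the maximal scalar spectral measures of $T_1$ and $T_2$ are equivalent; fix a representative $\mu$ supported on the common spectrum $\sigma := \sigma(T_1) = \sigma(T_2)$ (equality of spectra is immediate from the intertwining together with the fact that a bounded-below normal operator is invertible).

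Next, invoke the uniform multiplicity decomposition: write $\mathcal{H}_i = \bigoplus_\kappa E_i(A^{(i)}_\kappa)\mathcal{H}_i$, where $T_i$ acts on $E_i(A^{(i)}_\kappa)\mathcal{H}_i$ as multiplication by $z$ on $L^2(\mu|_{A^{(i)}_\kappa}) \otimes K_\kappa$ (with $\dim K_\kappa = \kappa$) and the sets $\{A^{(i)}_\kappa\}_\kappa$ partition $\sigma$ modulo $\mu$-null. Suppose for contradiction that $B := A^{(1)}_\kappa \cap A^{(2)}_{\kappa'}$ has $\mu(B) > 0$ for some $\kappa \neq \kappa'$. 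The restrictions of $U_1$ and $U_2$ to the spectral subspaces over $B$ are then $*$-intertwining isometries between $L^2(\mu|_B) \otimes K_\kappa$ and $L^2(\mu|_B) \otimes K_{\kappa'}$ as $M_z$-modules. Any such intertwiner commutes with $M_f \otimes I$ for every $f \in L^\infty(\mu|_B)$, so by direct-integral (measurable Hilbert bundle) theory it decomposes as a measurable field of isometries $K_\kappa \to K_{\kappa'}$, forcing $\kappa \leq \kappa'$; the reverse isometry forces $\kappa' \leq \kappa$, a contradiction. Hence $A^{(1)}_\kappa = A^{(2)}_\kappa$ modulo $\mu$-null for every $\kappa$, so the multiplicity functions of $T_1$ and $T_2$ agree $\mu$-a.e. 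The uniqueness clause of the spectral multiplicity theorem then delivers the desired $T_1 \cong T_2$.

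The main obstacle I anticipate is the fiberwise analysis in the previous paragraph: converting an intertwining isometry between uniform-multiplicity models into a measurable field of fiber isometries. This is where the direct-integral / measurable Hilbert bundle machinery must be handled carefully, especially in the potentially non-separable setting, and it is the only place in the argument that is not a direct application of the spectral theorem and cardinal arithmetic.
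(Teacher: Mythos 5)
Your opening move coincides with the paper's: both use Fuglede--Putnam (your $2\times 2$ block computation is a valid substitute, since $T_2|_{U_1(\mathcal{H}_1)}$ is normal, being unitarily equivalent to $T_1$) to upgrade the intertwining isometries to $*$-intertwiners. After that the routes diverge. The paper does \emph{not} compare multiplicity functions of $T_1$ and $T_2$ directly; it transports everything into $\mathcal{H}_2$ via $V:=U_1(\mathcal{H}_1)$ and $W:=U_1U_2(\mathcal{H}_2)$, reducing the theorem to a ``sandwich'' lemma: if $W\subseteq V\subseteq \mathcal{H}$ are reducing subspaces for a normal $T$ and $(\mathcal{H},T)\simeq(W,T|_W)$, then $(\mathcal{H},T)\simeq(V,T|_V)$. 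That lemma is proved for separable $\mathcal{H}$ by comparing direct-integral data, and then extended to arbitrary density character by a model-theoretic elementary-chain argument (writing the structure as a continuous increasing union of small elementary substructures and gluing unitaries built on the successive orthogonal differences). Your route instead compares the two operators' Hahn--Hellinger data head-on: equivalence of the measure classes from $E_1(B)=0\Leftrightarrow E_2(B)=0$, then a fiberwise Schr\"oder--Bernstein on the uniform-multiplicity pieces via decomposable intertwiners. For separable Hilbert spaces your argument is sound and arguably more self-contained than the paper's (no model theory needed); the decomposability of an operator commuting with $M_f\otimes I$ and the a.e.\ isometry of its fibers are standard there.

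The genuine gap is exactly the point you flag, and it is not a technicality: the theorem is stated for arbitrary Hilbert spaces, and in the non-separable setting the two pillars of your argument fail as stated. First, a maximal scalar spectral measure need not exist as a $\sigma$-finite Borel measure: for instance, the diagonal operator on $\ell^2([0,1])$ with eigenvalue $t$ in coordinate $t$ has $E(\{t\})\neq 0$ for uncountably many $t$, so no $\sigma$-finite $\mu$ represents its measure class, and ``fix a representative $\mu$'' has no meaning. Second, the decomposability theorem (every bounded operator commuting with the diagonal $L^\infty$-action is a measurable field of operators) is a separable-space theorem; with non-separable fibers $K_\kappa$ and non-$\sigma$-finite base the direct-integral formalism you invoke is not available off the shelf. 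A correct completion would have to either redo multiplicity theory at the level of the abelian von Neumann algebra generated by the spectral projections (comparison of modules over a localizable measure algebra), or do what the paper does: prove the separable case and then bootstrap by a L\"owenheim--Skolem/elementary-chain induction on density character. As written, your proof establishes the theorem only for separable $\mathcal{H}_1,\mathcal{H}_2$.
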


By applying the Cayley transform, one can use the previous theorem to prove a version for unbounded self-adjoint operators; we do this in the last section.

The naive approach of following the lines of the classical Schröder-Bernstein proof fails due to the extra structure of the Hilbert space, which introduces geometric and operator-theoretic obstacles. Instead, using the direct integral formulation of the spectral theorem,   show that, in a certain sense, if a normal operator $S_2$ lies between two unitarily equivalent normal operators $S_1$ and $S_3$ (in the sense of being a suboperator), then $S_2$ is also unitarily equivalent to $S_3$; see Proposition~\ref{noinvariant}. This is first proved for separable Hilbert spaces via spectral-theoretic arguments and then extended to general Hilbert spaces using model-theoretic methods. The final step follows from the Fuglede-Putnam theorem, combined with additional operator-theoretic analysis.

We would like to thank Alex Berenstein for discovering an error in the first version of this paper and for very useful discussions regarding the current version. The third named author also discussed this problem with Svetlana Jitomirskaya, who pointed out a reduction to the self-adjoint case via polarization, thereby obtaining the more general statement through an alternative argument.
\begin{remark}
Shortly after completing this work, we became aware of a proof of a statement more general than our main result, relying on the Schröder--Bernstein property for C$^*$-algebra representations: if two C$^*$-algebra representations $\rho$ and $\sigma$ are each unitarily equivalent to a subrepresentation of the other, then they are unitarily equivalent. To the best of our knowledge, this statement does not appear in the literature, but it was proved in a MathOverflow thread \cite{MO:SchroederBernsteinReps}. Using it, one can find an alternative and simpler proof of our main theorem. We have decided to keep this preprint on the arXiv, as it contains several other results which may be of independent interest.
\end{remark}
\section{The main theorem}
Throughout this paper, given two Borel measures $\mu_1,\mu_2$ on $\bb R$, we say $\mu_1$ and $\mu_2$ are mutually absolutely continuous, denoted $\mu_1\sim\mu_2$, if $\mu_1\ll\mu_2$ and $\mu_2\ll\mu_1$. In addition, given two bounded operators $T_1,T_2$ acting on Hilbert spaces $\mathcal{H}_1,\mathcal{H}_2$, we will write $\left(\mathcal{H}_1,T_1\right)\simeq\left(\mathcal{H}_2,T_2\right)$ if $T_1$ and $T_2$ are unitarily equivalent, that is, if there exists a unitary transformation $U:\mathcal{H}_1\to\mathcal{H}_2$ such that $T_1=U^{-1}T_2U$.

We remind the reader of the direct integral version of the spectral theorem for normal operators \cite{dixmier}:

\begin{fact}\label{direct_integral_fact}
    Suppose that $T$ is a normal operator on a separable Hilbert space $\mathcal{H}$.  Then there is a Borel probability measure $\mu$ on the spectrum $\sigma(T)$ of $T$ and a measurable family $(\mathcal{H}_\lambda)_{\lambda\in \mathbb{R}}$ of Hilbert spaces such that $T$ is unitarily equivalent to the multiplication operator on the direct integral $\int^\oplus \mathcal{H}_\lambda d\mu(\lambda)$.  Moreover, this direct integral representation of $T$ is a unitary invariant of $T$ in the following sense: Suppose that for $i=1,2$, $T_i$ is a bounded normal operator acting on a separable Hilbert space $\mathcal{H}_i$. Suppose, in addition that we have Borel probability measures $\mu_1,\mu_2$, a $\mu_1$-measurable family of Hilbert spaces $\left(\mathcal{H}_\lambda^1\right)_{\lambda\in\mathbb{R}}$ and a $\mu_2$-measurable family of Hilbert spaces $\left(\mathcal{H}_\lambda^2\right)_{\lambda\in\mathbb{R}}$ such that for $i=1,2$, $T_i$ is unitarily equivalent to the multiplication operator on the direct integral $\int^\oplus\mathcal{H}_\lambda^i\,d\mu_i\left(\lambda\right)$. Then
$\left(\mathcal{H}_1,T_1\right)\simeq\left(\mathcal{H}_2,T_2\right)$ if and only if: $\mu_1\sim\mu_2$ and for $\mu_1$-almost every $\lambda\in\mathbb{R}$, $\dim\mathcal{H}_\lambda^1=\dim\mathcal{H}_\lambda^2$.
\end{fact}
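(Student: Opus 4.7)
The plan is to prove the fact in two halves: existence of the direct integral representation, and the ``moreover'' clause characterizing unitary equivalence in terms of the pair $(\mu, \lambda\mapsto \dim\mathcal{H}_\lambda)$. Both halves are classical and can be assembled from the multiplicity/cyclic-decomposition form of the spectral theorem.

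For existence, I would start from the standard spectral theorem for a single normal operator on a separable Hilbert space, which produces a cyclic decomposition $\mathcal{H}=\bigoplus_{n\ge 1}\mathcal{H}_n$ into mutually orthogonal $T$-cyclic subspaces. On each $\mathcal{H}_n$, $T$ is unitarily equivalent to multiplication by $z$ on $L^2(\sigma(T),\mu_n)$ for some finite Borel measure $\mu_n$ supported on $\sigma(T)$. To convert this direct sum into a direct integral, set $\mu=\sum_n 2^{-n}\mu_n/\|\mu_n\|$ (a finite measure equivalent to the scalar spectral measure) and write $\mu_n=f_n\,d\mu$ with $f_n\ge 0$. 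Then $\bigoplus_n L^2(\sigma(T),\mu_n)$ is naturally unitarily equivalent to $\int^\oplus \mathcal{H}_\lambda\,d\mu(\lambda)$, where $\mathcal{H}_\lambda$ has an orthonormal basis indexed by $\{n:f_n(\lambda)>0\}$ and $T$ acts by multiplication by $\lambda$. Measurability of the field is immediate from this construction.

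For the invariance clause, one direction is a direct construction: if $\mu_1\sim\mu_2$ with Radon--Nikodym derivative $h=d\mu_2/d\mu_1$, and if one chooses measurable isomorphisms $V_\lambda:\mathcal{H}^1_\lambda\to\mathcal{H}^2_\lambda$ on a co-null set (using $\dim\mathcal{H}^1_\lambda=\dim\mathcal{H}^2_\lambda$ $\mu_1$-a.e.), then $f\mapsto h^{1/2}\cdot(V_\lambda f(\lambda))$ intertwines the two multiplication operators and is unitary. For the converse, suppose $(\mathcal{H}_1,T_1)\simeq(\mathcal{H}_2,T_2)$. The scalar spectral measure class is a unitary invariant of $T$ (it is the measure class of $\langle E_T(\cdot)\xi,\xi\rangle$ for any maximal cyclic vector $\xi$, or equivalently the trace of the projection-valued measure against any faithful normal state on $\{T\}''$); comparing with either side of the direct integral presentation shows $\mu_i$ lies in this class, hence $\mu_1\sim\mu_2$. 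For the dimension function, one invokes multiplicity theory: $\{T\}''$ together with its commutant $\{T\}'$ in $B(\mathcal{H}_i)$ is, under the direct integral identification, precisely the algebra of diagonalizable multiplication operators respectively the algebra of decomposable operators; the function $\lambda\mapsto\dim\mathcal{H}^i_\lambda$ is then the multiplicity function of this abelian von Neumann algebra, which is determined $\mu_i$-a.e.\ by $\{T_i\}''\subset B(\mathcal{H}_i)$ up to unitary equivalence.

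The conceptually transparent step is existence; the technically delicate step is the dimension invariant in the converse, since one must show that the pointwise dimension $\lambda\mapsto\dim\mathcal{H}^i_\lambda$ is $\mu_i$-measurable and that its $\mu_i$-a.e.\ equivalence class really is recovered from $(\mathcal{H}_i,T_i)$ up to unitary equivalence. I expect this multiplicity-theoretic bookkeeping --- essentially the uniqueness part of von Neumann's reduction theory for abelian von Neumann algebras --- to be the main obstacle, and indeed it is precisely the point at which the argument leans on the full strength of the direct integral machinery developed in \cite{dixmier}.
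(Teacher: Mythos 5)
The paper offers no proof of this statement: it is labeled a \emph{Fact} and cited directly to Dixmier's book as the direct integral form of the spectral theorem, so there is nothing internal to compare your argument against. Your sketch is the standard textbook route --- cyclic decomposition into $\bigoplus_n L^2(\sigma(T),\mu_n)$, a dominating scalar measure $\mu$ with $\mu_n=f_n\,d\mu$, fibers spanned by $\{n: f_n(\lambda)>0\}$ for existence, and the Hahn--Hellinger/reduction-theory uniqueness for the converse --- and it is correct in outline. Two small points: in the forward direction of the invariance clause the normalizing factor should be $(d\mu_1/d\mu_2)^{1/2}=h^{-1/2}$ rather than $h^{1/2}$ if the map goes from the $\mu_1$-integral to the $\mu_2$-integral (check the norm computation $\int c^2\|f(\lambda)\|^2 h\,d\mu_1$); and the measurable choice of the field of unitaries $V_\lambda$ deserves a sentence (restrict to the sets where the common dimension is constant and trivialize the field there). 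Neither is a genuine gap; both are absorbed by the same machinery you already invoke for the uniqueness of the multiplicity function.
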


\begin{cor}\label{helpful_cor}
    Let $T:\mathcal{H}\to\mathcal{H}$ be a bounded normal operator acting on a separable Hilbert space $\mathcal{H}$. Suppose that $W\subseteq\mathcal{H}$ is a closed subspace which is invariant under $T$ and under $T^*$. Consider the direct integral representations of $T$ and of $T|_W$ given by
    \begin{center}
        $\int^\oplus\mathcal{H}_\lambda\,d\mu\left(\lambda\right)$,\\$\text{ }$\\
        $\int^\oplus W_\lambda\,d\rho\left(\lambda\right)$
    \end{center}
    respectively. Then $\rho\ll\mu$, and for $\rho$-almost every $\lambda\in\mathbb{R}$, $\dim W_\lambda\leq\dim\mathcal{H}_\lambda$.
\end{cor}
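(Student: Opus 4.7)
The plan is to exploit the hypothesis that $W$ is invariant under both $T$ and $T^*$: this makes $W$ a \emph{reducing} subspace, meaning $W^\perp$ is also invariant under $T$ and $T^*$, and $T$ splits orthogonally as $T = T|_W \oplus T|_{W^\perp}$. Both summands are normal operators on separable Hilbert spaces (normality of $T|_W$ follows because $(T|_W)^* = T^*|_W$ commutes with $T|_W$), so each admits a direct integral representation. The given representation of $T|_W$ is $\int^\oplus W_\lambda\,d\rho(\lambda)$, and I pick any direct integral representation $\int^\oplus W^\perp_\lambda\,d\sigma(\lambda)$ of $T|_{W^\perp}$.

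The next step is to express both summands as direct integrals with respect to a single common measure, the natural choice being $\mu' := \rho + \sigma$. Setting $f := d\rho/d\mu'$, the map $(h_\lambda)_\lambda \mapsto \bigl(\sqrt{f(\lambda)}\,h_\lambda\bigr)_\lambda$ defines a unitary equivalence between $\int^\oplus W_\lambda\,d\rho(\lambda)$ and $\int^\oplus \widetilde W_\lambda\,d\mu'(\lambda)$, where $\widetilde W_\lambda := W_\lambda$ on $\{f > 0\}$ and $\widetilde W_\lambda := \{0\}$ elsewhere; this intertwines multiplication by $\lambda$ on either side. An analogous rewriting applies to the $W^\perp$ summand, yielding a fiberwise definition of $\widetilde W^\perp_\lambda$ and
\[
T \;\cong\; \int^\oplus \bigl(\widetilde W_\lambda \oplus \widetilde W^\perp_\lambda\bigr)\,d\mu'(\lambda).
\]

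At this point I compare this with the given representation $\int^\oplus \mathcal H_\lambda\,d\mu(\lambda)$ of $T$ using the uniqueness clause of Fact~\ref{direct_integral_fact}: this forces $\mu \sim \mu'$ and $\dim \mathcal H_\lambda = \dim \widetilde W_\lambda + \dim \widetilde W^\perp_\lambda$ for $\mu$-almost every (equivalently $\mu'$-almost every) $\lambda$. Since $\rho \ll \mu' \sim \mu$, the first assertion $\rho \ll \mu$ drops out; and since $f > 0$ $\rho$-almost everywhere by definition of the Radon--Nikodym derivative, we have $\widetilde W_\lambda = W_\lambda$ for $\rho$-a.e.\ $\lambda$, whence $\dim W_\lambda = \dim \widetilde W_\lambda \leq \dim \widetilde W_\lambda + \dim \widetilde W^\perp_\lambda = \dim \mathcal H_\lambda$ holds $\rho$-almost everywhere.

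I do not expect a serious obstacle here: once one notices that the two-sided invariance condition is precisely what is needed to orthogonally decompose $T$, the corollary reduces to bookkeeping with direct integrals. The only mildly technical point is the rewriting of a direct integral under an absolutely continuous change of measure, which is a standard construction but must be done carefully so that fibers are taken to vanish off the support of the corresponding Radon--Nikodym derivative.
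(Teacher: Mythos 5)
Your proof is correct and follows essentially the same route as the paper: use the two-sided invariance to split $T = T|_W \oplus T|_{W^\perp}$, combine the two direct integrals over the sum measure $\rho+\sigma$, and invoke the uniqueness clause of Fact~\ref{direct_integral_fact}. The only difference is that you spell out the change-of-measure bookkeeping (Radon--Nikodym rescaling of fibers) that the paper leaves implicit when it asserts that $\int^\oplus(W_\lambda\oplus E_\lambda)\,d(\rho+\sigma)(\lambda)$ is another representation of $T$.
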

\begin{proof}
    Let us also consider the direct integral representation of $T|_{W^\perp}$:
    \begin{center}
        $\int^\oplus E_\lambda\, d\sigma\left(\lambda\right)$
    \end{center}
    By the invariance of $W$ under both $T$ and $T^*$, we have that $T=T|_W\oplus T|_{W^\perp}$, which implies that the following is another direct integral representation of $T$:
    \begin{center}
        $\int^\oplus\left(W_\lambda\oplus E_\lambda\right)\,d\left(\rho+\sigma\right)\left(\lambda\right)$.
    \end{center}
    By Fact \ref{direct_integral_fact}, this implies that $\rho+\sigma$ and $\mu$ are mutually absolutely continuous, and that for $\mu$-almost every $\lambda\in\mathbb{R}$, the dimensions of $\mathcal{H}_\lambda$ and of $W_\lambda\oplus E_\lambda$ are equal. In particular we obtain that $\rho\ll\mu$ and that for $\rho$-almost every $\lambda\in\mathbb{R}$, $\dim W_\lambda\leq\dim\mathcal{H}_\lambda$, as required.
\end{proof}

\begin{prop}\label{noinvariant}
    Let $T:\mathcal{H}\to\mathcal{H}$ be a bounded normal operator acting on a separable Hilbert space $\mathcal{H}$. Suppose that $W\subseteq V\subseteq\mathcal{H}$ are closed subspaces which are invariant under both $T$ and under $T^*$. Suppose in addition that $\left(\mathcal{H},T\right)\simeq\left(W,T|_W\right)$. Then $\left(\mathcal{H},T\right)\simeq\left(V,T|_V\right)$.
\end{prop}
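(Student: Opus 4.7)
The plan is to pass to direct integral representations of $T$, $T|_V$, and $T|_W$ and combine the information provided by Fact~\ref{direct_integral_fact} and Corollary~\ref{helpful_cor} to match measure classes and fiber dimensions; the conclusion then drops out of the ``if'' direction of Fact~\ref{direct_integral_fact}.

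Concretely, I would fix direct integral representations $T \cong \int^\oplus \mathcal{H}_\lambda \, d\mu(\lambda)$, $T|_V \cong \int^\oplus V_\lambda \, d\tau(\lambda)$, and $T|_W \cong \int^\oplus W_\lambda \, d\rho(\lambda)$. The hypothesis $(\mathcal{H},T) \simeq (W, T|_W)$ together with the uniqueness part of Fact~\ref{direct_integral_fact} gives $\mu \sim \rho$ and $\dim \mathcal{H}_\lambda = \dim W_\lambda$ for $\mu$-a.e. $\lambda$.

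Next I would apply Corollary~\ref{helpful_cor} twice. Applied to the inclusion $V \subseteq \mathcal{H}$ (which is invariant under $T$ and $T^*$ by assumption), it yields $\tau \ll \mu$ and $\dim V_\lambda \leq \dim \mathcal{H}_\lambda$ for $\tau$-a.e. $\lambda$. Applied to the inclusion $W \subseteq V$, now with $T|_V$ playing the role of the ambient normal operator on the separable Hilbert space $V$, it yields $\rho \ll \tau$ and $\dim W_\lambda \leq \dim V_\lambda$ for $\rho$-a.e. $\lambda$. This second application is legitimate because $W$ is invariant under both $T|_V$ and $(T|_V)^* = T^*|_V$, which is immediate from $W \subseteq V$ together with the invariance of $W$ under $T$ and $T^*$.

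Combining these three pieces, we obtain $\mu \sim \rho \ll \tau \ll \mu$, so all three measures belong to a common measure class; in particular $\tau \sim \mu$. On a set of full $\mu$-measure (equivalently $\tau$- or $\rho$-measure) we then have the chain $\dim W_\lambda \leq \dim V_\lambda \leq \dim \mathcal{H}_\lambda = \dim W_\lambda$, forcing $\dim V_\lambda = \dim \mathcal{H}_\lambda$. The converse direction of Fact~\ref{direct_integral_fact} then gives $(V, T|_V) \simeq (\mathcal{H}, T)$. I do not anticipate any genuine obstacle: once one accepts the direct integral form of the spectral theorem the argument is essentially bookkeeping of measure-class and dimension-function invariants, and the only mild care needed is the compatibility of the various almost-everywhere statements, which is automatic because the three measures are mutually absolutely continuous.
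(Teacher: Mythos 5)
Your proposal is correct and follows essentially the same route as the paper's own proof: pass to direct integral representations, apply Corollary~\ref{helpful_cor} to the two inclusions $W\subseteq V$ and $V\subseteq\mathcal{H}$, use the hypothesis $(\mathcal{H},T)\simeq(W,T|_W)$ to close the chain of measure classes and fiber dimensions, and conclude via the uniqueness/"if" direction of Fact~\ref{direct_integral_fact}. The only difference is notational (your $\tau,\rho$ are the paper's $\rho,\sigma$), and you in fact spell out slightly more carefully why the second application of the corollary, with $T|_V$ as the ambient operator, is legitimate.
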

\begin{proof}
    Let us consider the direct integral representations of $T,T|_V,$ and $T|_W$:
    \begin{center}
        $\int^\oplus\mathcal{H}_\lambda\,d\mu\left(\lambda\right)$\\$\text{ }$\\
        $\int^\oplus V_\lambda\,d\rho\left(\lambda\right)$,\\$\text{}$\\
        $\int^\oplus W_\lambda\,d\sigma\left(\lambda\right)$.
    \end{center}
    By Corollary \ref{helpful_cor}, we have that $\sigma\ll\rho\ll\mu$, $\dim W_\lambda \leq \dim V_\lambda $ for $\sigma$-almost every $\lambda\in \bb R$, and $\dim V_\lambda \leq \dim \cal H_\lambda $ for $\rho$-almost every $\lambda\in \bb R$. On the other hand, by the unitary equivalence of $T$ and of $T|_W$, we have that $\mu$ and $\sigma$ are mutually absolutely continuous and $\dim \cal H_\lambda =\dim W_\lambda $ for $\sigma$-almost every $\lambda\in \bb R$. This implies that $\mu\ll\rho$, whence $\mu$ and $\rho$ are mutually absolutely continuous, and that for $\rho$-almost every $\lambda\in\mathbb{R}$, $\dim \cal H_\lambda =\dim W_\lambda \leq \dim V_\lambda \leq \dim \cal H_\lambda $, whence $\dim \cal H_\lambda=\dim V_\lambda$ for $\rho$-almost every $\lambda\in \bb R$. By Fact \ref{direct_integral_fact}, this implies that $\left(T,\mathcal{H}\right)\simeq\left(T|_V,V\right)$, as required.
\end{proof}

Our next goal is to show that Proposition \ref{noinvariant} holds for arbitrary (that is, not necessarily separable) Hilbert spaces.  To achieve that goal, we employ some model-theoretic techniques.  

In what follows, let $L$ be the language extending the language of Hilbert spaces which contains four new unary function symbols $T,P,Q,$ and $U$, all of which have $1$-Lipshitz moduli of uniform continuity.

\begin{prop}
There is a set $\Sigma$ of $L$-sentences whose models are exactly those $L$-structures of the form $(\cal H,T,P_V,P_W,U)$, where:
\begin{enumerate}
    \item $T:\cal H\to \cal H$ is a normal operator with $\|T\|\leq 1$;
    \item $P_V$ and $P_W$ are orthogonal projections onto closed subspaces $V$ and $W$ respectively, both of which are invariant under $T$ and $T^*$;
    \item $W\subseteq V$;
    \item $U:\cal H\to \cal W$ is a unitary map witnessing that $(\cal H,T)\simeq (W,T|W)$.
\end{enumerate}
\end{prop}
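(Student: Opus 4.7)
The plan is to write down an explicit set $\Sigma$ of $L$-sentences, one or a handful for each clause, and verify that its models are exactly the $L$-structures of the stated form. The bulk of this is a direct translation of algebraic identities and geometric conditions into $\sup$/$\inf$ axioms of continuous logic, layered on top of the standard Hilbert-space axioms (for the Hilbert sort) and the usual linearity axioms for each function symbol (e.g.\ $\sup_{x,y}\norm{T(x+y)-Tx-Ty}=0$, and likewise for scalar multiplication and for $P,Q,U$). The only point that requires a moment's thought is the normality clause in (1), since the adjoint $T^*$ is not a symbol of $L$ and must be accessed only through the inner product.

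To handle normality I would use the identity
\[
\norm{T^*x}=\sup_{\norm{y}\leq 1}\ab{\ip{x}{Ty}},
\]
which exhibits $\norm{T^*x}$ as a $\sup$-definable predicate in $L$. Normality of $T$, i.e.\ $\norm{Tx}=\norm{T^*x}$ for all $x$, is then expressible as the single axiom
\[
\sup_{\norm{x}\leq 1}\Bigl|\,\norm{Tx}-\sup_{\norm{y}\leq 1}\ab{\ip{x}{Ty}}\,\Bigr|=0,
\]
while $\norm{T}\leq 1$ is already enforced by the prescribed 1-Lipschitz modulus of $T$ in the language.

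For clauses (2) and (3) I would require that $P$ and $Q$ be orthogonal projections via $\sup_x\norm{P(Px)-Px}=0$ and $\sup_{x,y}\ab{\ip{Px}{y}-\ip{x}{Py}}=0$, and analogously for $Q$. Since a closed subspace is invariant under both $T$ and $T^*$ exactly when its projection commutes with $T$, I would add $\sup_x\norm{PTx-TPx}=0$ and $\sup_x\norm{QTx-TQx}=0$. The inclusion $W\subseteq V$ amounts to $PQ=Q$, rendered as $\sup_x\norm{PQx-Qx}=0$.

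For (4), $U$ being an isometry with range contained in $W$ is captured by $\sup_x\ab{\norm{Ux}-\norm{x}}=0$ together with $\sup_x\norm{QUx-Ux}=0$. Since an isometry automatically has closed range, surjectivity of $U$ onto $W$ is equivalent to density of the range in $W$, which is the single $\inf$-axiom $\sup_y\inf_x\norm{Ux-Qy}=0$. Finally, the intertwining condition is $\sup_x\norm{UTx-TUx}=0$; combined with the fact that $Ux\in W$, this says exactly that $U$ is a unitary from $\cal H$ onto $W$ witnessing $(\cal H,T)\simeq(W,T|_W)$. Taking $\Sigma$ to be the conjunction of all the above axioms gives the desired axiomatization, and a routine verification shows that a structure satisfies $\Sigma$ if and only if it has the indicated form.
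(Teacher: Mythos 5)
Your proposal is correct and follows essentially the same route as the paper: an explicit list of $\sup$/$\inf$ axioms of continuous logic encoding each clause, verified to pin down exactly the structures of the stated form. The only differences are cosmetic---you encode invariance of $V$ and $W$ under both $T$ and $T^*$ via the single commutation axiom $PT=TP$ where the paper uses separate $T$-invariance and $T^*$-invariance axioms, and you spell out an explicit normality axiom via $\|T^*x\|=\sup_{\|y\|\leq 1}|\langle x,Ty\rangle|$ where the paper leaves the normality axiom implicit.
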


\begin{proof}
Besides axioms stating that $T$, $P$, $Q$, and $U$ are linear and that $T$ is normal, we add the following axioms:
    \begin{enumerate}
    \item$\sup_x \max(d(Px,P^2x),d(Qx,Q^2x))=0$
    \item $\sup_{x.y}|\max(\langle Px,y\rangle-\langle x,Py\rangle|,\langle Qx,y\rangle-\langle x,Qy\rangle|)=0$
    \item $\sup_x d(PQ(x),P(x))=0$
    \item $\sup_x \max(d(PTPx,TPx),d(QTQx,TQx))=0$
    \item $\sup_{x,y}\max(|\langle Px,TPy\rangle-\langle Px,Ty\rangle|,|\langle Qx,TQy\rangle-\langle Qx,Ty\rangle|)=0$
    \item $\sup_{x,y}|\langle Ux,Uy\rangle-\langle x,y\rangle|=0$
    \item $\sup_x d(PUx,Ux)=0$
    \item $\sup_x\inf_y d(Uy,Qx)=0$
    \item $\sup_x d(UTx,TUx)=0$
    \end{enumerate}
    Axioms (1) and (2) state that $P$ and $Q$ are projections, while axiom (3) states that the image of $Q$ is contained in the image of $P$.  Axiom (4) states that the images of $P$ and $Q$ are $T$-invariant while axiom (5) states that they are $T^*$-invariant.  Axiom (6) states that $U$ is an isometry and axiom (7) states that the image of $U$ is contained in the image of $P$.  A priori, axiom (8) merely states that the image of $U$ is dense in the image of $Q$, but since $U$ is an isometry, it follows that it is in fact onto the image of $Q$.  The final axiom expresses that $U$ interwtwines $T$ and the restriction of $T$ to the image of $Q$.  
\end{proof}

\begin{prop}\label{orthogonalcomplements}
Suppose that $\cal M,\cal N\models \Sigma$ and $\cal M\preceq \cal N$.  Write $\cal M=(\cal H_\cal M,T_\cal M,V_\cal M,W_\cal M,U_\cal M)$ and $\cal N=(\cal H_\cal N,T_\cal N,V_\cal N,W_\cal N,U_\cal N)$.  Then:
\begin{enumerate}
    \item $W_\cal N\ominus W_\cal M\subseteq V_\cal N\ominus V_\cal M\subseteq \cal H_\cal N\ominus \cal H_\cal M$.
    \item $U_\cal N|H_\cal M=U_\cal M$.
    \item $U_\cal N|(\cal H_\cal N\ominus \cal H_\cal M)$ witnesses that $$(\cal H_\cal N\ominus \cal H_\cal M,T_\cal N|\cal H_\cal N\ominus \cal H_\cal M)\simeq (W_\cal N\ominus W_\cal M,T_\cal N|W_\cal N\ominus W_\cal M).$$
\end{enumerate}
\end{prop}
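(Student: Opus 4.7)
The approach is to verify the three assertions one at a time, repeatedly exploiting that $\cal M\preceq\cal N$ implies in particular that $\cal M$ is a substructure of $\cal N$: each interpreted function symbol $T, P, Q, U$ on $\cal N$ restricts on $\cal H_\cal M$ to its $\cal M$-interpretation. From this one immediately obtains $V_\cal M\subseteq V_\cal N$, $W_\cal M\subseteq W_\cal N$, $T_\cal N|_{\cal H_\cal M}=T_\cal M$, $P_\cal N|_{\cal H_\cal M}=P_\cal M$, and $Q_\cal N|_{\cal H_\cal M}=Q_\cal M$; item~(2) is a direct restatement of this principle for $U$.

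For item~(1) I would exploit the self-adjointness of the projections $P_\cal N$ and $Q_\cal N$. Given $w\in W_\cal N\ominus W_\cal M$, one has $w\in V_\cal N$ since $W_\cal N\subseteq V_\cal N$, and for any $v\in V_\cal M$,
\[
\langle w,v\rangle=\langle Q_\cal N w,v\rangle=\langle w,Q_\cal N v\rangle=\langle w,Q_\cal M v\rangle=0,
\]
because $Q_\cal M v\in W_\cal M$. The second inclusion is parallel, using $v=P_\cal N v$ for $v\in V_\cal N$ together with $P_\cal N|_{\cal H_\cal M}=P_\cal M$: for $h\in\cal H_\cal M$ and $v\in V_\cal N\ominus V_\cal M$, $\langle v,h\rangle=\langle v,P_\cal N h\rangle=\langle v,P_\cal M h\rangle=0$.

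For item~(3), since $U_\cal N$ is an isometry with range $W_\cal N$, its restriction to any closed subspace is an isometry. Using $U_\cal N|_{\cal H_\cal M}=U_\cal M$ and that $U_\cal M$ surjects onto $W_\cal M$, the identity
\[
\langle U_\cal N x,U_\cal M y\rangle=\langle U_\cal N x,U_\cal N y\rangle=\langle x,y\rangle
\]
simultaneously shows both that $U_\cal N(\cal H_\cal N\ominus\cal H_\cal M)\subseteq W_\cal N\ominus W_\cal M$ (take $x\perp\cal H_\cal M$) and that a preimage under $U_\cal N$ of any element of $W_\cal N\ominus W_\cal M$ lies in $\cal H_\cal N\ominus\cal H_\cal M$, giving surjectivity onto $W_\cal N\ominus W_\cal M$. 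The intertwining identity on these complements is then inherited from the global identity $U_\cal N T_\cal N=T_\cal N U_\cal N$.

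The main obstacle is a subtle invariance check needed for the restricted operators in~(3) to even be well-defined: one needs $\cal H_\cal M$ and $W_\cal M$ to be invariant under $T_\cal N^*$ (not just under $T_\cal N$) so that their orthogonal complements in $\cal H_\cal N$ and $W_\cal N$ are $T_\cal N$-invariant. The $T_\cal N$-invariance is either the substructure property for the function symbol $T$ (for $\cal H_\cal M$) or axiom~(4) (for $W_\cal M$); $T_\cal N^*$-invariance of $W_\cal M$ is axiom~(5), while $T_\cal N^*$-invariance of $\cal H_\cal M$ requires the continuous-logic axiomatization of a normal operator to encode the adjoint (e.g.\ by including $T^*$ as a separate function symbol whose axioms link it to $T$ via the inner product), so that $\cal M\preceq\cal N$ forces $T_\cal N^*|_{\cal H_\cal M}=T_\cal M^*$ as well.
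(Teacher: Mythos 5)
Your proof is correct and follows essentially the same direct-verification strategy as the paper; the only real variation is in item (1), where you use self-adjointness of the projections $Q_\cal N, P_\cal N$ together with the substructure property ($Q_\cal N|_{\cal H_\cal M}=Q_\cal M$, etc.), whereas the paper transfers the sentence $\sup_z|\langle Qz,y\rangle|=0$ from $\cal M$ to $\cal N$ by elementarity --- your version is, if anything, slightly more economical, and your write-up of (3) supplies details the paper compresses into ``follows from (1) and (2).'' Regarding the ``main obstacle'' you flag: you do not actually need to enlarge the language with a symbol for $T^*$, since normality already pins down the adjoint on the substructure --- for $x\in\cal H_\cal M$ one checks that $T_\cal N^*x-T_\cal M^*x\perp\cal H_\cal M$ and that $\|T_\cal N^*x\|=\|T_\cal N x\|=\|T_\cal M x\|=\|T_\cal M^*x\|$, which together force $T_\cal N^*x=T_\cal M^*x\in\cal H_\cal M$; hence $\cal H_\cal M$ is $T_\cal N^*$-invariant and the restricted operators in (3) are well defined.
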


\begin{proof}
To prove (1), take $x\in W_\cal N\ominus W_\cal M$ and $y\in V_\cal M$; we need $x\perp y$.  If $y\in W_\cal M$, then this is clear; if $y\in V_\cal M\ominus W_\cal M$, then $(\sup_z |\langle Qz,y\rangle|)^\cal M=0$, whence, by elementarity, $(\sup_z |\langle Qz,y\rangle|)^\cal N=0$, and thus $x\perp y$ in this case as well.  Since $V_\cal M=W_\cal M\oplus (V_\cal M\ominus W_\cal M)$, we have that $x\perp y$ for arbitrary $y\in V_\cal M$.  The argument that $V_\cal N\ominus V_\cal M\subseteq \cal H_\cal N\ominus \cal H_\cal M$ is similar. 

(2) is clear and (3) follows from (1) and (2).
\end{proof}

\begin{prop}\label{noinvariantgeneral}
Proposition \ref{noinvariant} holds for arbitrary Hilbert spaces $\cal H$.
\end{prop}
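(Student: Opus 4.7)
The plan is to reduce to the separable case of Proposition \ref{noinvariant} via transfinite induction on the density character $\kappa$ of $\mathcal{H}$, using the model-theoretic apparatus developed above. The base case $\kappa = \aleph_0$ is exactly Proposition \ref{noinvariant}. For the inductive step, given $(\mathcal{H},T,V,W)$ satisfying the hypotheses with $\mathcal{H}$ of density character $\kappa > \aleph_0$, the assumed equivalence $(\mathcal{H},T) \simeq (W, T|_W)$ furnishes a unitary $U:\mathcal{H}\to W$, making $\mathcal{M} := (\mathcal{H},T,P_V,P_W,U)$ into a model of $\Sigma$.

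Fix a dense subset $\{x_\alpha : \alpha < \kappa\}$ of $\mathcal{H}$. Using downward L\"owenheim--Skolem for continuous logic iteratively, I would build a continuous elementary chain $(\mathcal{M}_\alpha)_{\alpha < \kappa}$ of elementary substructures of $\mathcal{M}$ with $\mathcal{M}_0$ separable, $x_\alpha \in \mathcal{M}_{\alpha+1}$, and each $\mathcal{M}_\alpha$ of density character at most $|\alpha| + \aleph_0 < \kappa$. Since the union of this chain is elementary and contains a dense set, it equals $\mathcal{M}$ as a substructure.

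For each $\alpha < \kappa$ set $\mathcal{K}_\alpha := \mathcal{H}_{\mathcal{M}_{\alpha+1}} \ominus \mathcal{H}_{\mathcal{M}_\alpha}$, and analogously $V'_\alpha := V_{\mathcal{M}_{\alpha+1}} \ominus V_{\mathcal{M}_\alpha}$ and $W'_\alpha := W_{\mathcal{M}_{\alpha+1}} \ominus W_{\mathcal{M}_\alpha}$. Proposition \ref{orthogonalcomplements} applied to $\mathcal{M}_\alpha \preceq \mathcal{M}_{\alpha+1}$ gives $W'_\alpha \subseteq V'_\alpha \subseteq \mathcal{K}_\alpha$, each invariant under $T|_{\mathcal{K}_\alpha}$ and its adjoint, together with a unitary equivalence $(\mathcal{K}_\alpha, T|_{\mathcal{K}_\alpha}) \simeq (W'_\alpha, T|_{W'_\alpha})$. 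Since $\mathcal{K}_\alpha$ has density character $< \kappa$, the inductive hypothesis applied to the triple $(\mathcal{K}_\alpha, V'_\alpha, W'_\alpha)$ yields a unitary $\widetilde U_\alpha : \mathcal{K}_\alpha \to V'_\alpha$ intertwining the restrictions of $T$. Applying Proposition \ref{noinvariant} directly to the separable model $\mathcal{M}_0$ produces a corresponding unitary $\widetilde U_\star : \mathcal{H}_{\mathcal{M}_0} \to V_{\mathcal{M}_0}$.

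Continuity of the chain yields orthogonal decompositions $\mathcal{H} = \mathcal{H}_{\mathcal{M}_0} \oplus \bigoplus_{\alpha < \kappa} \mathcal{K}_\alpha$ and $V = V_{\mathcal{M}_0} \oplus \bigoplus_{\alpha < \kappa} V'_\alpha$, so the unitaries $\widetilde U_\star$ and $\widetilde U_\alpha$ assemble into a single unitary $\widetilde U : \mathcal{H} \to V$ with $T\widetilde U = \widetilde U T|_V$, completing the induction. The main difficulty lies in the bookkeeping for the chain: one must simultaneously ensure that the chain has length $\kappa$, that each increment has density character $< \kappa$ so that the inductive hypothesis applies, and that the chain is continuous at limits so that the orthogonal decomposition above is a genuine direct sum. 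All three requirements can be met by a standard cardinality computation together with downward L\"owenheim--Skolem in continuous logic.
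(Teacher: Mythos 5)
Your proposal is correct and follows essentially the same route as the paper: transfinite induction on the density character, a continuous elementary chain of models of $\Sigma$ of smaller density character, Proposition~\ref{orthogonalcomplements} to transfer the hypotheses to each successor increment $\mathcal{H}_{\alpha+1}\ominus\mathcal{H}_\alpha$, and a direct-sum assembly of the resulting unitaries. The only (immaterial) difference is that you assemble all the pieces in one global orthogonal decomposition at the end, whereas the paper builds the intertwining unitaries recursively along the chain.
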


\begin{proof}
We prove the proposition by induction on the density character\footnote{The density character of a Hilbert space is the smallest cardinality of a dense subset.  For uncountable density characters, this coincides with the Hilbert space dimension; for density character $\aleph_0$, the dimension could be any natural number or $\aleph_0$.} $\kappa$ of $\cal H$.  The case that $\kappa=\aleph_0$ was established in Proposition \ref{noinvariant}.

Now assume that $\kappa$ is an uncountable cardinal and that the proposition holds for Hilbert spaces of smaller density character.  Suppose that $\cal H$ has density character $\kappa$ and that $\cal M=(\cal H,T,V,W,U)$ is a model of $\Sigma$.\footnote{We are implicitly assuming, without loss of generality, that $\|T\|\leq 1$.}  We wish to show that $(\cal H,T)\simeq (V,T|V)$.

Let $(\cal M_\alpha)_{\alpha<\kappa}$ be a sequence of elementary substructures of $\cal M$ such that, for all $\alpha<\kappa$, we have:
\begin{itemize}
    \item $\cal M_\alpha\preceq \cal M_{\alpha+1}$;
    \item $\cal M_\alpha=\bigcup_{\beta<\alpha} \cal M_\beta$ if $\alpha$ is a limit ordinal;
    \item the density character of $\cal M_\alpha$ is less than $\kappa$;
    \item $\bigcup_{\alpha<\kappa}\cal M_\alpha=\cal M$.
\end{itemize}

Write $\cal M_\alpha=(\cal H_\alpha,T_\alpha,V_\alpha,W_\alpha,U_\alpha)$. 

We now construct, by recursion on $\alpha$, maps $\bar U_\alpha:\cal H_\alpha\to \cal H_\alpha$ which witness that $(\cal H_\alpha,T_\alpha)\simeq (V_\alpha,T_\alpha|V_\alpha)$.  Furthermore, we will construct the maps $\bar U_\alpha$ so that $\bar U_\beta|\cal H_\alpha=\bar U_\alpha$ for all $\alpha<\beta$, whence, setting $\bar U$ to be the unique extension of $\bigcup_{\alpha<\kappa}\bar U_\alpha$ to an isometry on $\cal H$, $\bar U$ will witness that $(\cal H,T)\simeq (V,T|V)$, as desired.

Since the density character of $\cal H_0$ is less than $\kappa$, we may assume, by induction, that $\bar U_0$ exists.  When $\alpha$ is a limit ordinal, we let $\bar U_\alpha$  be the unique extension of $\bigcup_{\beta<\alpha} \bar U_\beta$ to an isometry on $\cal H_\alpha$.  It remains to define $\bar U_{\alpha+1}$.  By Proposition \ref{orthogonalcomplements}, we know that $(\cal H_{\alpha+1}\ominus \cal H_\alpha,T_{\alpha+1}|\cal H_{\alpha+1}\ominus \cal H_\alpha)\simeq (W_{\alpha+1}\ominus W_\alpha,T_{\alpha+1}|W_{\alpha+1}\ominus W_\alpha)$; since the density character of $\cal H_{\alpha+1}$ is less than $\kappa$, by induction, we know that there is a $\bar U'_\alpha:\cal H_{\alpha+1}\ominus \cal H_\alpha\to \cal H_{\alpha+1}\ominus \cal H_\alpha$ witnessing that $(\cal H_{\alpha+1}\ominus \cal H_\alpha,T_{\alpha+1}|\cal H_{\alpha+1}\ominus \cal H_\alpha)\simeq (V_{\alpha+1}\ominus V_\alpha,T_{\alpha+1}|V_{\alpha+1}\ominus V_\alpha)$.  Defining $\bar U_{\alpha+1}:=\bar U_\alpha\oplus \bar U'_\alpha$ yields the desired  map.
\end{proof}
The last ingredient we need for the proof of our main theorem is the Fuglede-Putnam theorem (see, for example, \ \cite[Chapter IX]{conway}):

\begin{fact}\label{fuglede}
Suppose that $T_i:\cal H_i\to \cal H_i$ are bounded operators for $i=1,2$.  Further suppose that $S:\cal H_1\to \cal H_2$ is a bounded operator that \emph{interwines} $T_1$ and $T_2$, that is, such that $ST_1=T_2S$. Then $S$ also intertwines $T_1^*$ and $T_2^*$, that is, $ST_1^*=T_2^*S$.  
\end{fact}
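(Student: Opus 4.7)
The plan is to follow Rosenblum's classical argument, which proves the theorem by applying Liouville's theorem to a cleverly chosen entire operator-valued function whose uniform boundedness is guaranteed by the normality of $T_1$ and $T_2$. I note that, although the statement as written does not explicitly include normality of $T_1,T_2$, this hypothesis is surely intended: the conclusion fails without it (take $T_1 = T_2$ to be any non-self-adjoint operator and $S = I$), and the reference to Conway's Chapter IX confirms the normal setting.

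The first step is to upgrade the intertwining relation. From $ST_1 = T_2 S$ one obtains $S T_1^n = T_2^n S$ by induction, and then, by norm-convergence of the exponential series,
$$S\, e^{w T_1} = e^{w T_2}\, S, \qquad \text{equivalently,} \qquad S = e^{-w T_2}\, S\, e^{w T_1}$$
for every $w \in \mathbb C$. Next, define the entire operator-valued function
$$F(z) := e^{z T_2^*}\, S\, e^{-z T_1^*}.$$
Substituting the above identity with $w = \bar z$ yields
$$F(z) = e^{z T_2^*}\,e^{-\bar z T_2}\; S\; e^{\bar z T_1}\,e^{-z T_1^*}.$$
Because each $T_i$ is normal, $T_i$ commutes with $T_i^*$, so the adjacent exponentials combine into
$$F(z) = e^{z T_2^* - \bar z T_2}\; S\; e^{\bar z T_1 - z T_1^*}.$$
The exponents $z T_2^* - \bar z T_2$ and $\bar z T_1 - z T_1^*$ are visibly skew-adjoint, so the outer factors are unitary, and hence $\|F(z)\| \leq \|S\|$ uniformly in $z \in \mathbb C$.

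To finish, fix $x \in \mathcal H_1$ and $y \in \mathcal H_2$. The scalar function $z \mapsto \langle F(z)x, y\rangle$ is entire and bounded (by $\|S\|\cdot\|x\|\cdot\|y\|$), hence constant by Liouville's theorem. Since $x$ and $y$ were arbitrary, $F(z) = F(0) = S$ for all $z$. Differentiating the definition of $F$ at $z = 0$ then yields
$$0 = F'(0) = T_2^*\, S - S\, T_1^*,$$
which is precisely the desired identity $S T_1^* = T_2^* S$.

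The main obstacle is more a subtle point than a genuine difficulty: the whole argument turns on being able to combine $e^{z T_i^*}$ with $e^{-\bar z T_i}$ into a single exponential of a skew-adjoint operator. This combining step requires $T_i$ and $T_i^*$ to commute --- i.e., normality of each $T_i$ --- and this is the unique place where the hypothesis truly enters. Without normality, the outer factors of $F(z)$ would fail to be unitary, the uniform bound would collapse, and Liouville could not be invoked.
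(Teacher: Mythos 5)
Your proof is correct: it is Rosenblum's classical argument for the Fuglede--Putnam theorem, which is the standard proof and (essentially) the one in the reference the paper cites. The paper itself offers no proof of this statement --- it is stated as a Fact with a pointer to Conway, Chapter IX --- so there is nothing to compare approaches against; you have simply supplied the canonical argument. One point worth emphasizing is that you correctly caught a genuine omission in the paper's statement: as written, the Fact asserts the conclusion for arbitrary bounded $T_1,T_2$, which is false (your counterexample $T_1=T_2$ non-normal, $S=I$ is exactly right), and normality of $T_1$ and $T_2$ must be added as a hypothesis. This is harmless for the paper, since the Fact is only ever invoked with $T_1,T_2$ normal in the proof of the main theorem, but your repair is the correct reading. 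All the individual steps --- upgrading $ST_1=T_2S$ to $Se^{wT_1}=e^{wT_2}S$, the unitarity of $e^{zT_i^*-\bar z T_i}$ via skew-adjointness (which is where normality enters, as you note), the Liouville step applied to the scalar functions $z\mapsto\langle F(z)x,y\rangle$, and extracting the first-order coefficient at $z=0$ --- are sound.
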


 We are now ready to prove our main theorem.  We repeat the statement for the convenience of the reader.

    \begin{thm}\label{maintheorem}
        Let $\mathcal{H}_1,\mathcal{H}_2$ be Hilbert spaces and let $T_i:\mathcal{H}_i\to\mathcal{H}_i$ be bounded normal operators. Suppose that there exist linear isometries $U_1:\mathcal{H}_1\to\mathcal{H}_2$ and $U_2:\mathcal{H}_2\to\mathcal{H}_1$ such that $T_1U_2=U_2T_2$ and $T_2U_1=U_1T_1$. Then $T_1$ and $T_2$ are unitarily equivalent.
    \end{thm}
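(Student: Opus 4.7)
The plan is to reduce the theorem to the sandwich Proposition \ref{noinvariantgeneral} by using the two isometries to produce nested closed subspaces of $\mathcal{H}_1$ that are invariant under both $T_1$ and $T_1^*$. Explicitly, I would set $V := U_2(\mathcal{H}_2)$ and $W := (U_2 U_1)(\mathcal{H}_1)$, so that $W \subseteq V \subseteq \mathcal{H}_1$. Both are closed since $U_2$ and $U_2 U_1$ are isometries with complete domains, and moreover $U_2 \colon \mathcal{H}_2 \to V$ and $U_2 U_1 \colon \mathcal{H}_1 \to W$ are surjective isometries. A direct calculation with the given intertwining relations yields $U_2 U_1 T_1 = U_2 T_2 U_1 = T_1 U_2 U_1$, so $U_2 U_1$ is a unitary onto $W$ that witnesses $(\mathcal{H}_1, T_1) \simeq (W, T_1|_W)$, and similarly $U_2$ witnesses $(\mathcal{H}_2, T_2) \simeq (V, T_1|_V)$.

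Next I would check that $V$ and $W$ are invariant under both $T_1$ and $T_1^*$. The $T_1$-invariance drops out of the intertwining relations: $T_1 U_2 = U_2 T_2$ gives $T_1(V) \subseteq V$, and combining with $T_2 U_1 = U_1 T_1$ gives $T_1(W) \subseteq W$. For $T_1^*$-invariance one invokes Fact \ref{fuglede} (Fuglede--Putnam): since $U_2$ intertwines $T_2$ with $T_1$, it also satisfies $U_2 T_2^* = T_1^* U_2$, so $T_1^*(V) \subseteq V$; analogously $U_1 T_1^* = T_2^* U_1$, and composing yields $T_1^* (U_2 U_1) = U_2 U_1 T_1^*$, hence $T_1^*(W) \subseteq W$. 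This is precisely the step that uses normality of $T_1$ and $T_2$ in an essential way.

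With the hypotheses of Proposition \ref{noinvariantgeneral} verified for the ambient operator $T_1$ on $\mathcal{H}_1$ and the nested pair $W \subseteq V \subseteq \mathcal{H}_1$ together with the self-similarity $(\mathcal{H}_1, T_1) \simeq (W, T_1|_W)$, I would conclude that $(\mathcal{H}_1, T_1) \simeq (V, T_1|_V)$. Chaining this with $(\mathcal{H}_2, T_2) \simeq (V, T_1|_V)$ produces the desired unitary equivalence between $T_1$ and $T_2$.

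The main obstacle has already been discharged in the preparatory sections: Proposition \ref{noinvariantgeneral}, proved first on separable spaces via the direct integral form of the spectral theorem and then transferred to arbitrary density character through the L\"owenheim--Skolem chain in the language $L$. Granted that proposition, the remaining work in the proof of the main theorem is essentially bookkeeping, with Fuglede--Putnam supplying the one non-tautological ingredient needed to invoke it.
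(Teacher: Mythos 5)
Your proposal is correct and follows essentially the same route as the paper's proof: the paper works inside $\mathcal{H}_2$ with $V:=U_1(\mathcal{H}_1)$ and $W:=U_1(U_2(\mathcal{H}_2))$, while you work with the mirror-image choice inside $\mathcal{H}_1$, but the structure (closed nested invariant subspaces, Fuglede--Putnam for $T^*$-invariance, then Proposition \ref{noinvariantgeneral} and transitivity) is identical.
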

    \begin{proof}
        Set $V:=U_1\left(\mathcal{H}_1\right)$ and $W:=U_1\left(U_2\left(\mathcal{H}_2\right)\right)$. We claim that $V$ and $W$ are closed subspaces which are invariant under $T_2$ and under $T_2^*$. Closedness follows immediately from unitarity of $U_1$ and $U_2$. To see that these spaces are $T_2$-invariant, let us consider $\varphi_1\in V$, $\varphi_2\in W$ and take $\psi_1\in\mathcal{H}_1$, $\psi_2\in\mathcal{H}_2$ such that $\varphi_1=U_1\psi_1$, $\varphi_2=U_1U_2\psi_2$. Then we have
        \begin{center}
            $T_2\varphi_1=T_2U_1\psi_1=U_1T_1\psi_1\in U_1\left(\mathcal{H}_1\right)=V$,\\$\text{}$\\
            $T_2\varphi_2=T_2\left(U_1U_2\psi_2\right)=U_1T_1U_2\psi_2=U_1U_2T_2\psi_2\in U_1U_2\left(\mathcal{H}_1\right)=W$,
        \end{center}
        as required. It is left to show that both spaces are $T_2^*$-invariant. Note that the only thing we used to prove the invariance under $T_2$ is the intertwining properties of $T_1$ and $T_2$ under $U_1$ and $U_2$. By Fact \ref{fuglede}, this property also holds for $T_1^*$ and $T_2^*$. Consequently, one can proceed along the same lines and obtain the invariance of $W$ and $V$ under $T_2^*$. 
        
        We now claim that $\left(\mathcal{H}_2,T_2\right)\simeq\left(W,T_2|_W\right)$. This is true since $U_1U_2$ is a surjective isometry between $\mathcal{H}_2$ and $W$, which means it is unitary. In addition, by the above calculation it commutes with $T_2$. An identical argument implies that $\left(\mathcal{H}_1,T_1\right)\simeq\left(V,T_2|_V\right)$. Finally, by Proposition \ref{noinvariantgeneral}, we get that $\left(\mathcal{H},T_2\right)\simeq\left(V,T_2|V\right)$. By transitivity of the relation of unitary equivalence, we obtain $\left(\mathcal{H}_1,T_1\right)\simeq\left(\mathcal{H}_2,T_2\right)$ as required.
    \end{proof}
\section{Unbounded self-adjoint operators}

\begin{thm}
    Let $\mathcal{H}_1$ and $\mathcal{H}_2$ be Hilbert spaces and let $T_i:\mathcal{H}_i\to \mathcal{H}_i$ be unbounded self-adjoint operators.  Further suppose that there exist linear isometries $U_1:\mathcal{H}_1\to \mathcal{H}_2$ and $U_2:\mathcal{H}_2\to \mathcal{H}_1$ such that $U_1T_1\subseteq T_2U_1$ and $U_2T_2\subseteq T_1U_2$.  Then $(\mathcal{H}_1,T_1)\simeq (\mathcal{H}_2,T_2)$ in the sense that there is a unitary transformation $U:\mathcal{H}_1\to \mathcal{H}_1$ such that $UT_1U^{-1}=T_2$.
\end{thm}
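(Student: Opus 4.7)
The plan is to reduce to the bounded normal case via the Cayley transform. For each self-adjoint operator $T_i$ define
\[
V_i := (T_i - iI)(T_i + iI)^{-1},
\]
which is a unitary operator on $\mathcal{H}_i$ with the property that $I - V_i$ has dense range, and from which $T_i$ is recovered as $T_i = i(I + V_i)(I - V_i)^{-1}$ on the domain $\mathrm{dom}(T_i) = \mathrm{Ran}(I - V_i)$. The first step is to verify that the (partial) intertwining relations $U_1 T_1 \subseteq T_2 U_1$ and $U_2 T_2 \subseteq T_1 U_2$ at the unbounded level translate to genuine bounded intertwining $U_1 V_1 = V_2 U_1$ and $U_2 V_2 = V_1 U_2$.

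For this, take $\eta \in \mathcal{H}_1$ and write $\eta = (T_1 + iI)\psi$ with $\psi \in \mathrm{dom}(T_1)$. Since $U_1 T_1 \subseteq T_2 U_1$, we have $U_1\psi \in \mathrm{dom}(T_2)$ and $U_1 T_1 \psi = T_2 U_1 \psi$, hence
\[
U_1 V_1 \eta = U_1(T_1 - iI)\psi = (T_2 - iI)U_1\psi = V_2(T_2+iI)U_1\psi = V_2 U_1\eta.
\]
The analogous calculation with $U_2$ gives $U_2 V_2 = V_1 U_2$. Since $V_1, V_2$ are unitary, they are in particular bounded normal operators, and the hypotheses of Theorem~\ref{maintheorem} are satisfied. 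Applying it yields a unitary $U : \mathcal{H}_1 \to \mathcal{H}_2$ with $U V_1 = V_2 U$.

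It remains to transfer this intertwining back to the unbounded operators. From $U V_1 = V_2 U$ we get $U(I - V_1) = (I - V_2)U$, so $U$ maps $\mathrm{Ran}(I - V_1) = \mathrm{dom}(T_1)$ bijectively onto $\mathrm{Ran}(I - V_2) = \mathrm{dom}(T_2)$. For $\xi = (I - V_1)\eta \in \mathrm{dom}(T_1)$, setting $\zeta := U\eta$, we have $U\xi = (I - V_2)\zeta$, and therefore
\[
T_2 U\xi = i(I + V_2)\zeta = i U(I + V_1)\eta = U T_1 \xi.
\]
Hence $U T_1 U^{-1} = T_2$, giving the desired unitary equivalence.

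The bulk of the work is already done by Theorem~\ref{maintheorem}; the only non-routine task is the domain bookkeeping when converting between $T_i$ and $V_i$, and this is where I expect the main (mild) obstacle to lie. One must be careful that the inclusions $U_j T_j \subseteq T_k U_j$ give \emph{equalities} on vectors of the form $(T_j + iI)\psi$, so that the Cayley transform identity $V_i(T_i + iI) = T_i - iI$ can be applied on both sides; the fact that $T_j + iI$ maps $\mathrm{dom}(T_j)$ \emph{onto} $\mathcal{H}_j$ (a consequence of self-adjointness) is what makes the calculation above run on all of $\mathcal{H}_j$ rather than on a proper subspace.
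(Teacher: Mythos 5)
Your proposal is correct and follows essentially the same route as the paper: reduce to the bounded case via the Cayley transform, verify $U_jV_j=V_kU_j$ using that $T_j+iI$ maps $\mathrm{dom}(T_j)$ onto $\mathcal{H}_j$, apply Theorem~\ref{maintheorem} to the unitaries $V_i$, and transfer back. The only divergence is the final step: the paper asserts that $U_1$ itself is a surjective isometry and then invokes the fact that self-adjoint operators have no proper symmetric extensions, whereas you explicitly invert the Cayley transform using the unitary $U$ supplied by Theorem~\ref{maintheorem}; your version is, if anything, the more careful one, since Theorem~\ref{maintheorem} only produces \emph{some} intertwining unitary rather than showing $U_1$ is onto.
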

\begin{proof}
    For each $i=1,2$, let $V_i:=(T_i-i)(T_i+i)^{-1}$ be the Cayley transform of $T_i$, which is a unitary operator on $\mathcal{H}_i$.  Note then that we have
    \begin{alignat}{2}
U_1V_1&=U_1(T_1-i)(T_1+i)^{-1}\notag \\ \notag
                     &=(T_2-i)U_1(T_1+i)^{-1}\\ \notag
                     &=(T_2-i)(T_2+i)^{-1}U_1\\ \notag
                     &=V_2U_1.\notag
    \end{alignat}
    
   In the same way, we have that $U_2V_2=V_1U_2$.  Since each $V_i$ is unitary (and hence normal), the proof of Theorem \ref{maintheorem},  shows that $U_1$ is a surjective isometry.  It follows then that $U_1T_1U_1^{-1}\subseteq T_2$ and thus $U_1T_1U_1^{-1}=T_2$ as self-adjoint operators have no proper symmetric extensions.
\end{proof}

\end{document}